\newcommand{\bbQ}{\mathbb{Q}}
\newcommand{\Image}{\mathop{\rm Im}}
\newcommand{\Real}{\mathop{\rm Re}}
\newenvironment{pfl1}{\par\noindent\textbf{Proof of Lemma 1.}}{\hfill$\square$}
\newtheorem*{theorem}{Theorem}
\newtheorem{lemma}{Lemma}
\begin{document}
2010 Mathematics Subject Classification 11M41, 11M26

\begin{center}
{\bf \large On the zeros of linear combinations of degree two L-functions on the critical line. Selberg's approach.}
\end{center}  
\begin{center}
{\bf I.S.~Rezvyakova} 

{\it Steklov Mathematical Institute of RAS, Moscow}
\end{center}

\begin{flushright}
{\small \it Dedicated to the memory of Professor Atle Selberg \\
and Professor Anatoly Karatsuba} 
\end{flushright}

\begin{abstract}

We consider in details the method of A.~Selberg which allows  one to prove under certain natural conditions that a positive proportion of non-trivial zeros of a linear combination of L-functions from Selberg class lie on the critical line. We  provide all the necessary ingredients to prove this result in the case of a linear combination of degree two L-functions attached to automorphic forms.
\end{abstract}


{\bf Keywords:} Riemann hypothesis, zeros on the critical line, Selberg class, density theorems, Hecke L-functions.
\begin{center}
{\bf  \Large 1. Introduction}
\label{sec:intro}
\end{center}

In the celebrated work \cite{Selberg1942}, A.~Selberg proved that a positive proportion of non-trivial zeros of the Riemann zeta-function lie on the critical line. Similar result also holds true for Dirichlet L-functions. 

In \cite{Selberg_Amalfi} Selberg introduced a class of L-series (S class, or Selberg class) which are supposed to satisfy an analogue of the Riemann hypothesis. One of the main characteristics of an element from Selberg class is its degree. There is a conjecture which states that a degree $d$ of a function from S class is a non-negative integer. It is known \cite{Richert}, \cite{KP} that there are no elements in Selberg class for non-integer degree $d < 2$. Selberg class of degree 1 is completely described: it consists of the Riemann zeta-function and the imaginary shifts of Dirichlet L-functions. But the structure of Selberg class of degree 2 (as well as of higher degrees) is not clear however. 

The first analogue of Selberg theorem \cite{Selberg1942} for a function of degree two was proved by J.L.~Hafner.  In \cite{Hafner1} he established it for L-functions attached to holomorphic cusp forms of an even weight for the full modular group and trivial character which are also eigenfunctions  of all the Hecke operators, and later in \cite{Hafner2} he established it for L-functions attached to Maass wave forms (for the necessary definitions see, e.g., \cite{Iwaniec_book}). In \cite{Rezvyakova_MZ} we extended a class of L-functions for which an analogue of Selberg theorem is valid and proved it for L-functions attached to automorphic eigen cusp forms of any integer weight $k \ge 1$ for the Hecke congruence group and any character.  

A general linear combination of L-functions from Selberg class (that is the one having no Euler-product decomposition in the region of absolute convergence of the corresponding series) has many zeros outside the critical line and, thereby, violates the Riemann hypothesis (see the works of H.~Davenport and H.~Heilbronn \cite{Dav_Heil},  S.M.Voronin \cite{Voronin}, Y.Lee \cite{Lee} and A.~Selberg \cite{Selberg_Amalfi}). Nevertheless, there is a conjecture stating that almost all non-trivial zeros of such linear combination (satisfying in addition a functional equation of the Riemann type) lie on the critical line. This conjecture was conditionally proved for the Epstein zeta-function by E.~Bombieri and D.~Hejhal \cite{Bom_Hejhal} 
using the GRH and the pair correlation conjecture for the Hecke L-functions. 

The first unconditional result  (and stronger than that could be obtained by means of the Hardy-Littlewood method) on the number of critical line zeros of a linear combination of degree one L-functions was published in 1980 by S.M.~Voronin \cite{Voronin1980}. Later, in 1989, A.A.~Karatsuba \cite{Karatsuba1}, \cite{Karatsuba3} made a substantial improvement in this problem and, finally, in the late 90s, A.~Selberg \cite{Selberg1998}, \cite{Selberg1999} proved that a linear combination (that also obeys an appropriate functional equation) of degree one L-functions has a positive proportion of its non-trivial zeros on the critical line.  Atle Selberg in his proof of this result used besides the methods of \cite{Selberg1942} an additional information about the value distribution  of L-functions on the critical line. 
Selberg noted that his method could also work for a linear combination of L-functions of degree two. 
In this work we provide a detailed proof which establishes the result of A.~Selberg for a linear combination of Hecke L-functions. Moreover, this method could be applied for a general linear combination of L-functions from Selberg class.
Roughly speaking, if we can show for a set of L-functions from Selberg class that each of them has a positive proportion of its non-trivial zeros on the critical line, 
then we will likely be able to prove an analogue of Selberg theorem 
for a linear combination (satisfying a functional equation) of those L-functions. This is achieved by a ``unification'' of the proofs of two results: the analogue of Selberg theorem \cite{Selberg1942} and Selberg density theorem for each L-function. This is then enough  to follow the same arguments for any linear combination to conclude the desired statement. Therefore, we answer the question posted in \cite{Bom_Ghosh} about a rigorous proof of Selberg's result for the Davenport-Heilbronn function but in a more general situation.

The scheme of the proof 
was introduced by Selberg \cite{Selberg1999} and is described in \S 3. 
The proof relies on the estimates (\ref{eq5})--(\ref{eq7}) for 
each L-function from a linear combination as well as on the value distribution result (\ref{eq8}), which we have to establish in every certain case.  All the other details in the proof are similar for any general linear combination. The estimates (\ref{eq5})--(\ref{eq7}) play a crucial role 
in the proof of an analogue of Selberg theorem \cite{Selberg1942} for L-function. We recall (see lemma 4 of \cite{Hafner1} or lemma 2 in \cite{Rezvyakova_MZ}), that both 
(\ref{eq5}) and (\ref{eq6}) follow from  
the same mean-value theorem. Also, by a special choice of the ``mollifier'' presented in our proof, the estimate (\ref{eq7}) as well as Selberg density theorem (\ref{eq14})  (which we shall exploit to establish the value distribution result (\ref{eq8})) are consequences of (\ref{eq6}) (see \S 4).  If we suppose in addition that L-functions from a given linear combination 
are somewhat independent (for example, we may assume the truth of Selberg's orthogonality conjecture for those L-functions), we can prove (\ref{eq8}) applying (\ref{eq14}) (see \S 6). 

All the details in the 
article are given for a linear combination of  Hecke L-functions with complex ideal class group characters of the imaginary quadratic field $\bbQ(\sqrt{-D})$ (which is a linear combination of L-functions from Selberg class of degree two attached to automorphic cusp forms considered in \cite{Rezvyakova_MZ}). 
Analogously to \cite{Rezvyakova_MZ}, we prove in \S5 the estimates (\ref{eq5}) and (\ref{eq6})  but with another  ``mollifier'' and, therefore, following the exposition of \S3 establish the main result formulated in the theorem.
We can also prove an analogous result
for the Epstein zeta function  corresponding to a binary positive definite quadratic form with integer coefficients (i.e., for a linear combination of Hecke L-functions with complex and real ideal class group characters). The necessary mean-value estimates 
corresponding to L-functions with real Hecke characters (which are attached to non-cusp forms) will be given in additional work for completeness of the proof in this case.

\begin{center}
{\bf  \Large 2. Definitions and statement of the main result}
\label{sec:2}
\end{center}

Let $\mathbb Q(\sqrt{-D})$ be the imaginary quadratic extension
of $\mathbb Q$,  and $-D$ be the fundamental discriminant.  Consider a Hecke character $\psi$ of the ideal class group of $\mathbb Q(\sqrt{-D})$. We shall call a Hecke
character \textit{complex}, if it takes at least one complex
(non-real) value, otherwise, we shall call it a \textit{real} character.

For $\operatorname{Re}s>1$ consider a Hecke
$L$-function
$$
L_{\psi}(s)=\sum_{\mathfrak a\in\mathbb I^*} \frac{\psi(\mathfrak
a)}{N\mathfrak a^s},
$$
where the sum is performed over all non-zero ideals $\mathfrak a$ of the  ring of algebraic integers of $\mathbb Q(\sqrt{-D})$, and $N\mathfrak a$ denotes the norm of $\mathfrak a$. The series $
L_{\psi}(s)$ can be written via the Euler product in the right half-plane $\operatorname{Re}s>1$ as
$$
L_{\psi}(s)=\prod_{\mathfrak p}\biggl(1- \frac{\psi(\mathfrak
p)}{N\mathfrak p^s}\biggr)^{-1},
$$
where the product goes over all prime ideals. Let 
$$
r_\psi(n)=\sum_{N\mathfrak a=n}\psi(\mathfrak a). 
$$
From the theory of ideals of imaginary quadratic fields we have the following Euler product representation over prime numbers $p$ for $L_{\psi}(s)$:
\begin{align*}
&L_{\psi} (s) = \sum\limits_{n=1}^{+\infty} \frac{r_{\psi} (n)}{n^s} = \prod\limits_{p} \left( 1 - \frac{r_{\psi}(p)}{p^s} + \frac{\chi_{D} (p)}{p^{2s}}\right)^{-1} = \prod\limits_{p} \left( 1 - \frac{\gamma_{p}}{p^s} \right)^{-1} 
\left( 1 - \frac{\delta_{p}}{p^s} \right)^{-1}, 
\end{align*}
where $|\gamma_{p}|, |\delta_{p}| \in\{ 0, 1 \}$.
Therefore, $r_{\psi(n)}$ can be majorized by the coefficients of the series, defined by the following Euler product:
$$
\prod_{p} \biggl( 1
-\frac{1}{p^s} \biggr)^{-2} = \sum\limits_{n=1}^{+\infty} \frac{\tau(n)}{n^s}.
$$
Thus, 
\begin{equation*}
|r_{\psi} (n)| \le \tau(n).
\end{equation*}

The function $L_{\psi}(s)$ has 
a meromorphic continuation to the whole complex plane with the only pole (which is simple) at $s=1$ in the case of the principle character, and without 
singularities for non-principle Hecke characters. 
E. Hecke (\cite{Hecke1917},~\cite{Hecke1926}) established a functional equation for $L_{\psi} (s)$: {\it let
$$
\Lambda_{\psi}(s)=\biggl(\frac{2\pi}{\sqrt
D}\biggr)^{-s}\Gamma(s)L_{\psi}(s),
$$
then
$$
\Lambda_{\psi}(s)=\Lambda_{\psi}(1-s).
$$}
Hecke also established a connection between Hecke L-functions and modular forms. {\it 
For $z\in\mathbb
H=\{z\colon\operatorname{Im}z>0\}$ set
$$
f(z)=\sum_{n=1}^{+\infty}r_\psi(n)e^{2\pi inz}.
$$
If  $\psi$ is a complex Hecke character, then $f(z)$ is a
holomorphic cusp form of weight~1 for $\Gamma_0(D)$ and the
character $\chi_D(\,\cdot\,)=(\frac{-D}\cdot)$.} In other words,
for every matrix $\gamma=\bigl(\begin{smallmatrix}a&b
\\
c&d\end{smallmatrix}\bigr)$ of the group $\Gamma_0(D)$ (which is a
subgroup of $\text{SL}_2(\mathbb Z)$ that consists of all the
matrices~$\gamma$ satisfying the condition $c\equiv0\ (\operatorname{mod}D)$) the
relation
\begin{equation*}
f(\gamma z) = \chi_{D} (\gamma) (cz +d) f(z),
\end{equation*}
is valid with $\chi_{D} (\gamma) = \chi_{D} (d)$ (see \cite{Hecke1926},~\cite{Iwaniec_book}).

In \cite{Rezvyakova_MZ} we proved that a positive proportion of non-trivial zeros of $L_{\psi}(s)$ lie on the critical line. 
In this work we shall prove the following theorem. 
\begin{theorem}
Suppose that $F(s)  = \sum\limits_{j=1}^{m} c_j L_{j}(s)$ 
is a linear combination of $m$ distinct Hecke $L$-functions attached to complex ideal class group characters
formed with the real coefficients $c_j$. Then a positive proportion of non-trivial zeros of $F(s)$ lie on the critical line. Namely, if we denote by $N_0(T)$ the number of zeros of $F(s)$ on the interval $\{ s= 1/2+it, T\le t \le 2T\}$, then for any large positive $T$ we have 
$$
N_0(T) \gg \frac{1}{m} T \log T.
$$
\end{theorem}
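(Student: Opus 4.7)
The plan is to adapt Selberg's sign-change scheme to $F$. First, since every $L_j$ shares the same conductor $D$ and gamma factor $(\sqrt D/2\pi)^{-s}\Gamma(s)$ and the coefficients $c_j$ are real, the completed function $\Lambda_F(s)=(\sqrt D/2\pi)^{-s}\Gamma(s) F(s)$ will satisfy $\Lambda_F(s)=\Lambda_F(1-s)$. This allows one to introduce a real-valued Hardy function $Z_F(t)$ on the critical line with $|Z_F(t)|=|F(\tfrac12+it)|$, and $N_0(T)$ is then bounded below by the number of sign changes of $Z_F$ on $[T,2T]$. Following \S 3, I would count sign changes of the twisted function $Z_F(t)\phi(\tfrac12+it)$, where $\phi$ is a Dirichlet polynomial mollifier of length $T^\theta$ (for some $\theta<1$) constructed in \S 5 to smooth $F$ in second mean. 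A Cauchy--Schwarz style comparison then reduces matters to an upper bound for $\int_T^{2T} \phi F\,\eta\,dt$ and a lower bound for $\int_T^{2T} |\phi F|\,\eta\,dt$ against a suitably localised smooth cutoff $\eta$ supported near $[T,2T]$.

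Second, the required inputs split into two families. The first family consists of the mean value estimates (5), (6), (7) and the density estimate (14), applied to each $L_j$ separately: (5) and (6) will be proved in \S 5 via the chosen mollifier, while (7) and (14) follow from (6) as indicated in the introduction. The second input, genuinely new for linear combinations, is the value distribution result (8): a joint central limit theorem for $\log L_j(\tfrac12+it)/\sqrt{\tfrac12\log\log T}$ asserting that the $m$ components are asymptotically independent standard complex Gaussians as $t$ ranges over $[T,2T]$. Granted (8), I would partition $[T,2T]=\bigsqcup_{j=1}^m E_j \cup E_\mathrm{bad}$ with $|E_j|\gg T/m$ and $|E_\mathrm{bad}|=o(T)$, arranged so that on $E_j$ one has $|L_j(\tfrac12+it)|\gg (\log T)^A |L_k(\tfrac12+it)|$ for all $k\neq j$ and a sufficiently large constant $A$. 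On $E_j$, $\phi F$ inherits the sign-change behaviour of $c_j\phi_j L_j$, so that the single-function Selberg argument for $L_j$ produces $\gg |E_j|\log T$ sign changes of $Z_F$ on $E_j$; summing over $j$ yields $N_0(T)\gg T\log T/m$.

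The main obstacle will be proving (8) with sufficient strength to effect this partition. Its derivation from the density theorem (14) depends on Selberg's orthogonality conjecture, which in the present setting reduces to the elementary statement that for distinct complex Hecke characters $\psi_j,\psi_k$ with $\psi_k\neq \psi_j,\overline{\psi_j}$ the sum $\sum_{p\le x} r_{\psi_j}(p)\overline{r_{\psi_k}(p)}/p$ is $o(\log\log x)$; this follows from orthogonality of characters on the ideal class group restricted to split rational primes. A secondary subtlety is engineering the mollifier $\phi$ so that it simultaneously controls the second mean of each $L_j\phi$ and damps the off-diagonal cross terms $c_k L_k\phi$ on the dominating set $E_j$; this is precisely the role of the ``mollifier'' construction highlighted in \S 4 and \S 5, which must be tuned to serve the unified proof of Selberg's theorem and Selberg's density theorem for every $L_j$ at once.
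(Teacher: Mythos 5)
Your outline follows the paper's strategy quite closely (real completed combination $\mathfrak{F}(t)=\sum_j c_j\Lambda_j(\frac12+it)$, per-function mollifiers built from $L_j^{-1/2}$, the estimates (\ref{eq5})--(\ref{eq7}) and the density theorem (\ref{eq14}), a value-distribution statement proved via Selberg orthogonality, and a dominance partition of $(T,2T)$ on which the single-function argument is run), but two steps as you state them would not work. First, your partition requires sets $E_j$ of measure $\gg T/m$ on which $|L_j(\frac12+it)|\gg(\log T)^A|L_k(\frac12+it)|$ for all $k\ne j$. This is incompatible with the very central limit theorem you invoke: the fluctuations of $\log|L_j(\frac12+it)|$ (and of the differences $\log|L_j|-\log|L_k|$) are of size $\sqrt{\log\log T}$, so a gap of size $A\log\log T$ in the logarithms is a large deviation and holds only on a set of measure $o(T)$ (indeed $\ll T(\log T)^{-cA^2}$), not on a positive proportion. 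The paper calibrates this differently: it only demands the gap $(\log\log T)^{\delta}$ with $0<\delta<\frac12$, i.e.\ a ratio $e^{(\log\log T)^{\delta}}$, which by (\ref{eq8}) fails only on a set of measure $O(T\varphi(T))=o(T)$, and then checks via Cauchy's inequality and the local consequence of (\ref{eq6}) that this much weaker dominance already forces the cross terms to be $O(H/\log\log T)$, which is all that is needed against the main term of size $\asymp H$.

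Second, dominance of $L_j$ at the point $t$ is not enough: to conclude that $\mathfrak{F}(t+u)$ behaves like $c_j\Lambda_j(\frac12+i(t+u))$ you need the dominance to persist for (most) $u\in[0,H]$, and nothing in your sketch supplies this. The paper devotes a separate ingredient to exactly this: the moment estimate (\ref{eq9}) for $\Delta_j(t,H)-\log|L_j(\frac12+i(t+u))|$ (derived from (\ref{eq12}) and the density theorem), which shows that outside exceptional sets $W_j(t)\subset[0,H]$ of measure $\le H(\log\log T)^{-3}$ (for all $t$ outside a set of measure $O(T(\log\log T)^{-3})$) the logarithms stay within $(\log\log T)^{\delta/2}$ of their interval averages, so the gap of size $(\log\log T)^{\delta}$ at $t$ survives on $H_t=(0,H)\setminus\bigcup_k W_k(t)$. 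Relatedly, be careful with the weight: the paper multiplies $\mathfrak{F}(t+u)$ by the nonnegative factor $|\eta_j(\frac12+i(t+u))|^2$ (with $\eta_j$ the mollifier of the dominating $L_j$), so that a sign change of the weighted integrand is a sign change of $\mathfrak{F}$ itself; a generic real mollifier $\phi$ multiplying $Z_F$ would not have this property. With the threshold recalibrated, the persistence step added, and the weight taken as $|\eta_j|^2$ on each dominance set, your plan becomes the proof given in \S 3 of the paper, with $H\asymp m/\log T$ producing the claimed $\gg \frac1m T\log T$ zeros.
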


\begin{center}
{\bf  \Large 3. Selberg's approach}
\label{sec:3}
\end{center}

The following idea of detecting zeros of odd order of a real-valued function goes back to H.~Bohr and E.~Landau.
Suppose that we have a real-valued function $F(t)$, and that for some $t$ and $H>0$ we can show that 
\begin{equation}\label{eq2}
\int\limits_{t}^{t+H} |F(u)| du > \left|\int\limits_{t}^{t+H} F(u) du \right|.
\end{equation}
Then $F(\cdot)$ changes its sign on $(t, t+H)$ and, thus, has a zero of odd order on that interval. 
It is easy to show, that if the measure of the subset $\{ t \in (T, 2T) \text{ such that } (\ref{eq2}) \text{ holds} \}$ has its maximal order (i.e. $T$), then the number of zeros of $F$ on the interval $(T, 2T)$ is not less than $c T H^{-1}$ for some absolute positive constant $c$. 

In \cite{Selberg1942} Selberg introduced a special ``mollifier'' to prove his brilliant result on the number of  zeros of the Riemann zeta-function on the intervals of the critical line. For a given $L$-function a ``mollifier'' is connected to $L^{-1} (s)$ and we take it as follows (see \cite{Selberg1999} for the same choice). 
For $L_j (s) := L_{\psi_j}(s)$ 
define $\alpha_{j} (\nu)$ by the relation
\begin{equation*}
\sum\limits_{\nu =1}^{+\infty} \alpha_{j}(\nu) \nu^{-s} 
= L_j(s)^{-1/2}.
\end{equation*}
For $X\ge 3$ define a ``mollifier''  $\eta_j (s)$ as the following Dirichlet polynomial 
$$
\eta_j (s) = \sum\limits_{\nu < \sqrt{X}} \frac{\alpha_j (\nu)}
{\nu^{s}} +\sum\limits_{\sqrt{X} \le \nu \le X }  \frac{\alpha_j (\nu)}
{\nu^{s}} \left( 2 - 2 \frac{\log \nu}{\log X} \right)  := \sum\limits_{\nu \le X} \frac{\beta_j (\nu)}{\nu^s},
$$
where 
\begin{equation}
\label{eq4}
\beta_j (\nu ) = \alpha_j (\nu ) \EuScript{L}(\nu)
\quad \text{for} \quad \nu \le X,
\end{equation}
and 
\begin{equation*}
\EuScript{L}(\nu) = \begin{cases} 1 &\text{for }\ \nu < \sqrt{X},
\\
2\dfrac{\log X/\nu}{\log X} & \text{for }\ \sqrt{X} \le \nu \le X, \\
0 &\text{otherwise}.
\end{cases}
\end{equation*}
We take $H \asymp \frac{1}{\log T}$ and $X$ as a small (but fixed) power of $T$ (i.e., $\log X \asymp \log T$). Define
\begin{equation*}
I_j(t, H)  = \int\limits_{t}^{t+H} 
\Lambda_j\left( \frac12 + iu\right) \left| \eta_j \left( \frac12 +iu\right)\right|^{2}
\exp\left( \left(\frac{\pi}{2} -\frac{1}{T}\right) u\right) du
\end{equation*}
and suppose that the following estimates hold:
\begin{equation}\label{eq5}
\begin{split}
\int\limits_{T}^{2T} |I_j(t, H)|^2 dt & = O\left( T \frac{H}{\log T}\right), 
\end{split}
\end{equation}
\begin{equation}\label{eq6}
\int\limits_{T}^{2T} |L_j(1/2+it) \eta_j^2 (1/2+it)|^2 dt = O(T),
\end{equation} 
and, setting 
$$
M_j(t, H) = \int\limits_{t}^{t+H} L_j (1/2 +iu) \eta_j^2 (1/2+iu) du - H,
$$ 
that 
\begin{equation}\label{eq7}
\begin{split}
\int\limits_{T}^{2T} |M_j(t, H)|^2 dt & = O\left( T \frac{H}{\log T}\right).
\end{split}
\end{equation}
We appeal to the three latter estimates in the proof of the main theorem. 

Our next explanation in this section follows closely \cite{Selberg1999} and keeps most of the notation and phrases. To adapt the idea of \cite{Selberg1942} for a linear combination, we need some results on the value distribution of  $\log
|L_j(1/2+it)|$. 
If we have two distinct Hecke L-functions, then 
the difference
$$
\frac{\log|L_{j}(1/2+it)| -  \log|L_{j'}(1/2+it)|}{\sqrt{(n_{j} +n_{j'} )\pi
\log\log t}}
$$
has a normal distribution, where $n_{j}$ equals to $1$ or $2$ depending on 
$\psi_j$ being a complex or a real character, respectively.
More precisely: let 
$\varkappa_{a,b}$ denotes the characteristic function of the
interval
$(a,b)$, then 
\begin{equation}\label{eq8}
\int\limits_{T}^{2T} \varkappa_{a,b} \left( \frac{\log|L_{j}(1/2+it)| - \log|L_{j'}(1/2+it)|}{\sqrt{(n_{j} +n_{j'} )\pi \log\log T}} \right) dt =
T \int\limits_{a}^{b} e^{-\pi u^2} du + O \left( T\varphi(T)\right),
\end{equation}
where $\varphi(T) \to 0$ as $T\to +\infty$. 
A skeleton of the proof of (\ref{eq8}) (which is similar for different L-functions) is presented in \S~6. 
A very sharp estimate on $\varphi(T)$ can be obtained by means of (\ref{eq12}) 
following the idea of Atle Selberg which was done in details for the Riemann zeta function by K.M.~Tsang in his Ph.D. thesis \cite{Tsang}. In this case one gets 
$$
\varphi(T) \ll \frac{(\log\log\log T)^2}{\sqrt{\log\log T}} .
$$
An easier method of A.~Ghosh \cite{Ghosh} gives a weaker estimate, but for our task we can use any non-trivial result
$$
\varphi(T)  = o(1).
$$
Fix $0< \delta <1/2$ and suppose without loss of generality  that we have (\ref{eq8}) with a monotonically decreasing function $\varphi(T)$ 
such that $\varphi(T)  \gg (\log\log T)^{-\frac12+\delta}$. 
Whence, the subset of the interval $(T, 2T)$ where the inequality 
\begin{equation*}
\left|\log|L_{j}(1/2+it)| -  \log|L_{j'}(1/2+it)|\right| \le
(\log\log T)^{\delta}
\end{equation*}
holds has the measure $O\left(T \varphi(T) \right)$.
Thus, most of the time one of $L_j(1/2+it)$ dominates all the other 
decisively. We show further that this dominance is somewhat persistant over stretches quite long compared to $\dfrac{1}{\log T}$. Suppose $1 \le H \log T \le \varphi^{-1} (T)$. Define 
$$
\Delta_{j} (t, H) = \frac{1}{H} \int\limits_{t}^{t+H} \log
|L_{j}(1/2+iu)| du.
$$
For $0 \le u \le H$  and any positive integer $k$ one can show  that (for the proof see \S 6)
\begin{equation}\label{eq9}
\int\limits_{T}^{2T} \left( \Delta_{j} (t, H) - \log
|L_{j}(1/2+i(t+u))| \right)^{2k} dt = O\left( T e^{Ak}
\left(k^{4k} + k^k \log^k (H\log T)\right)\right).
\end{equation}
Integrating over $u$, we obtain
$$
\int\limits_{T}^{2T} \int\limits_{0}^{H} \left( \Delta_{j} (t, H)
- \log |L_{j}(1/2+i(t+u))| \right)^{2k} du dt = O\left( TH e^{Ak}
\left(k^{4k} + k^k  (\log \log \log T)^k\right)\right).
$$
If we denote by $W_j(t)$ the subset of $u \in [0, H]$ for which
\begin{equation*}
\left|\Delta_{j} (t, H) - \log |L_{j}(1/2+i(t+u))|\right| >
(\log\log T)^{\frac{\delta}{2}},
\end{equation*}
we find, choosing $k$ so large that $k\delta > 7$, that the measure of this subset can be estimated from above as 
$$
\mu \left(W_j(t)\right) \le \frac{H}{(\log\log T)^3},
$$
except for a subset of $t$ from $(T, 2T)$ of measure $O\left(T(\log\log
T)^{-3} \right)$.

If we exclude from $(T, 2T)$ all $t$ such that 
\begin{equation*}
\left| \log|L_{j}(1/2+it)| -  \log|L_{j'}(1/2+t)| \right| \le
(\log\log T)^{\delta}
\end{equation*}
for some $j \ne j'$, and also exclude all 
$t$ such that $\mu \left(W_j(t)\right) > \frac{H}{(\log\log T)^3}$
for some $j$, we get that the interval $(T, 2T)$ apart from a subset of measure $O(m^2 T \varphi(T))$ can be  divided into 
$m$ sets $S_j$ such that for each $t \in S_j$ we have for $j'\ne j$
\begin{equation*}
\log|L_j(1/2+it)| -  \log|L_{j'}(1/2+it)| > (\log\log
T)^{\delta},
\end{equation*}
and for $u\in H_t := (0, H) \setminus \bigcup\limits_{k=1}^{m} W_k (t)$
\begin{equation*}
\begin{split}
&\log|L_j(1/2+i(t+u))| -  \log|L_{j'}(1/2+i(t+u))| = \left( \log|L_j(1/2+i(t+u))| -
\Delta_{j} (t, H) \right) \\
&-
\left( \log|L_j(1/2+it)| - \Delta_{j} (t, H) \right) 
- \left( \log|L_{j'}(1/2+i(t+u))| - \Delta_{j'} (t, H) \right) \\
&+ \left(
\log|L_{j'}(1/2+it)|  - \Delta_{j'} (t, H) \right) 
+ \left(
\log|L_j(1/2+it)| - \log|L_{j'}(1/2+it)| \right) \\
& > (\log\log T)^{\delta} - 4 \left(\log\log T
\right)^{\frac{\delta}{2}} > \frac12  (\log\log T)^{\delta}.
\end{split}
\end{equation*}
From (\ref{eq6}) we see that
$$
\int\limits_{t}^{t+H} |L_j(1/2+iu) \eta_j^2 (1/2+iu)|^2 dt < H
\log\log T
$$
except for a subset of $t\in (T, 2T)$ of measure $O \left(
\dfrac{T}{\log\log T}\right)$. We also exclude those $t$ from $S_j$
without renaming them.

Define $\mathfrak{F}(t)$ by the formula 
$$
\mathfrak{F}(t)= \sum\limits_{j=1}^{m} c_j \Lambda_j\left( \frac12 +
it\right).
$$
Notice, that this is a real-valued function for real $t$, and its real zeros are the zeros of our linear combination on the critical line.

Now, for $t\in S_j$, compare the two values: 
\begin{equation*}
\begin{split}
I(t, H) & = \int\limits_{H_t} \mathfrak{F} (t+u) \left|\eta_j\left(\frac12+i(t+u)\right)\right|^2
\exp\left( \left(\frac{\pi}{2} -\frac{1}{T}\right)
(t+u)\right)du, \\
J(t, H) & = \int\limits_{H_t} \left|\mathfrak{F} (t+u) \eta_j^2\left(\frac12+i(t+u)\right)\right|
\exp\left( \left(\frac{\pi}{2} -\frac{1}{T}\right) (t+u)\right) du.
\end{split}
\end{equation*}
If $J(t, H) > |I(t, H)|$, then $\mathfrak{F}(\cdot)$ changes its sign on $(t,
t+H)$ and so has at least one zero of odd order there.
Using Cauchy's inequality, we find
\begin{equation*}
\begin{split}
I(t, H) & = c_j \int\limits_{u\in H_t} \Lambda_j\left( \frac12+i(t+u)\right)\left|\eta_j\left( \frac12+i(t+u)\right)\right|^2  \exp\left( \left(\frac{\pi}{2} -\frac{1}{T}\right) (t+u)\right) du \\
& + O \left( m \int\limits_{u\in H_t} \left|L_j\left( \frac12+i(t+u)\right)\right|\left|\eta_j\left( \frac12+i(t+u)\right)\right|^2
e^{-\frac12
(\log\log T)^{\delta}}du \right) \\
&= c_j \int\limits_{0}^{H}  \Lambda_j\left( \frac12+i(t+u)\right)\left|\eta_j\left( \frac12+i(t+u)\right)\right|^2 \exp\left(
\left(\frac{\pi}{2} -\frac{1}{T}\right) (t+u)\right) du \\
&+ O\left( \sqrt{\frac{H}{(\log\log T)^3}} \cdot \sqrt{H\log\log
T}\right) + O \left( m \sqrt{\frac{H\cdot H \log\log T}{e^{(\log\log
T)^{\delta}}}}
\right)\\
&= c_j I_j(t, H) +  O\left(
\frac{H}{\log\log T} \right)
\end{split}
\end{equation*}
if $T$ is large enough compared to $m$.
Similarly we get
\begin{equation*}
\begin{split}
J(t, H) &= c_j J_j(t, H) + O\left( \frac{H}{\log\log T} \right),
\end{split}
\end{equation*}
where 
\begin{equation*}
\begin{split}
J_j(t, H) & = \int\limits_{t}^{t+H} |\Lambda_j (1/2+iu)| |\eta_j(1/2+iu)|^2 \exp\left(
\left(\frac{\pi}{2} -\frac{1}{T}\right) u\right) du.
\end{split}
\end{equation*}
For $T \le t \le 2T$, using 
Stirling's formula for the Gamma function we get 
$$
J_j(t, H) \ge e^{-3} \int\limits_{t}^{t+H} |L_j (1/2 +iu) \eta_j^2 (1/2+iu)| du \ge e^{-3} \left( H - |M_j(t, H)|\right).
$$
From (\ref{eq5}) and (\ref{eq7}) it follows, that 
$$
|I_j (t, H)| \le \frac{H}{3 e^3} \quad \text{and} \quad |M_j (t, H)| \le
\frac{H}{3}
$$
outside a set of measure $O\left( \frac{T}{H\log T}\right)$. Therefore, for $t\in S_j$ except for a subset of measure $O\left(
\frac{T}{H\log T}\right)$ the inequality
$$
\frac{|I(t, H)|}{c_j} \le \frac{H}{3 e^3} + O\left( \frac{H}{\log\log T} \right)<
\frac {H}{2 e^3} < \frac{2H}{3 e^3} - O\left( \frac{H}{\log\log T} \right)
\le  \frac{J(t, H)}{c_j},
$$
or
$$
|I(t, H)| < J(t, H)
$$
holds. Taking into account all $j$ we get that $F(\cdot)$ changes its sign on $(t, t+H)$ for $t$ from a subset of $(T, 2T)$ of measure
$$
\sum\limits_{j=1}^{m} \mu(S_j) - O\left( \frac{mT}{H\log
T}\right)= T - O\left( \frac{mT}{H\log T}\right) - O(m^2 T \varphi(T)),
$$
which produces more than $\frac{c}{m} T\log T$ zeros ($c>0$) of $F$ (if $T$ is quite large) taking $H
= \frac{Am}{\log T}$ with large enough constant $A$.

\begin{center}
{\bf  \Large 4. The proof of the mean-value estimate for $M(t,H)$ and Selberg density theorem}
\label{sec:4}
\end{center}
In this section for a given L-function we shall 
prove (\ref{eq7}) exploiting the estimate (\ref{eq6}) 
(see also \cite{Selberg1999} for the same material). In the end of this section we explain that the same arguments imply Selberg density theorem for this L-function. Throughout the current and the next sections, we shall omit for notational ease the indices $j$ of a given 
L-function. 

Using Cauchy's residue theorem, we find 
\begin{equation*}
\begin{split}
 |M(t,H)| &= \left|\int\limits_{t}^{t+H} (L (1/2 +iu) \eta^2 (1/2+iu) -1)  du\right| \\
&\le \left|\int\limits_{1/2}^{3/2} (L (\sigma +it) \eta^2 (\sigma
+it) -1) d\sigma\right| + 
\left|\int\limits_{1/2}^{3/2} \left( L \left(\sigma +i(t+H)\right) \eta^2 \left(\sigma +i(t+H)\right) -1 \right) d\sigma\right| \\
&+ \left|\int\limits_{t}^{t+H} (L (3/2 +iu) \eta^2 (3/2+iu) -1)  du\right|. 
\end{split}
\end{equation*}
The last integral can be estimated as $O(X^{-a/4})$ with $a$ being arbitrary positive number less than $1$, since, by the definition (\ref{eq4}) of $\beta(\nu)$, 
the integrand is the following Dirichlet series 
\begin{equation*}
\begin{split}
\sum\limits_{n>\sqrt{X}}  n^{-3/2-iu} (\sum\limits_{m \nu_1 \nu_2 = n} r(m) \beta(\nu_1) \beta(\nu_2))  
\end{split}
\end{equation*}
with the following bound on its coefficients: 
 \begin{equation}
\begin{split}
|r(m)|\le \tau(m), |\beta(\nu)| \le |\alpha(\nu)|, \sum\limits_{\nu_1 \nu_2 = d} |\alpha(\nu_1) \alpha (\nu_2)| \le \tau(d), \\
|\sum\limits_{m \nu_1 \nu_2 = n} r(m) \beta(\nu_1) \beta(\nu_2)| \le \sum\limits_{m \nu_1 \nu_2 = n} \tau(m) \tau(\nu_1) \tau(\nu_2) = \tau_6(n). \label{eq22}
\end{split}
\end{equation}

Also,
\begin{equation*}
\begin{split}
& \left|\int\limits_{1/2}^{3/2} (L (\sigma +it) \eta^2 (\sigma +it)
-1) d\sigma\right|^2 \\
&\le \int\limits_{1/2}^{3/2} X^{\frac{a}{2} (\frac12-\sigma)}d\sigma
\cdot \int\limits_{1/2}^{3/2} X^{\frac{a}{2} (\sigma-\frac12)} \left|L
(\sigma +it) \eta^2 (\sigma +it) -1\right|^2 d\sigma.
\end{split}
\end{equation*}
If we define 
$$
J_{\sigma} = \int\limits_{T}^{2T} \left|L(\sigma +it) \eta^2
(\sigma +it) -1\right|^2 dt,
$$
we find from (\ref{eq6}) that 
$$
J_{1/2} \ll T + \int\limits_{T}^{2T} \left| L \eta^2 \left( \frac12+it\right)\right|^2 dt
\ll T
$$
and from (\ref{eq22}) that  
\begin{equation*}
\begin{split}
 J_{3/2}& = 
\int\limits_{T}^{2T} \left| \sum\limits_{n>\sqrt{X}}  n^{-3/2-it} \left(\sum\limits_{m \nu_1 \nu_2 = n} r(m) \beta(\nu_1) \beta(\nu_2)\right)\right|^2 dt \\
&\ll  T \sum\limits_{n>\sqrt{X}}  \frac{\tau_6^2 (n)}{n^{3}}  + O(1) \ll T X^{-a}.
\end{split}
\end{equation*} 
Now,  an application of Gabriel's convexity theorem yields 
\begin{equation}\label{eq23}
 J_{\sigma} = O\left( T X^{-a
(\sigma- \frac12)}\right).
\end{equation}
Thus, setting, for example, $a=\frac12$, and suggesting that $
H \gg \frac{\log T}{\log^2 X}$ 
we obtain 
$$
\int\limits_{T}^{2T} |M(t, H)|^2 dt \ll T X^{-a/2} + T \left( \int\limits_{1/2}^{3/2} X^{-\frac{a}{2} (\sigma-\frac12)}d\sigma\right)^2 = O\left(  T/\log^2 X \right) = O\left(  T H/\log T \right).
$$

The estimate (\ref{eq23}) obtained above is also a sufficient instrument to derive the following Selberg density theorem for a given L-function: 
\begin{equation}\label{eq14}
N(\sigma, T) \ll T^{1-a_1 (\sigma-1/2)} \log T,
\end{equation}
where $a_1$ is some positive constant and $N(\sigma, T)$ is the number of zeros of $L(s)$ in the region $\Real s \ge \sigma, \; T \le \Image s \le 2T$.  From (\ref{eq23}) the density theorem can be established by the standard arguments (see, for example, \cite{Selberg1946} or \cite{Luo}).  We shall exploit the density theorem (\ref{eq14}) 
in order to prove the mean-value estimates (\ref{eq12}) which are necessary to establish the limit theorem (\ref{eq8}).

\begin{center}
{\bf  \Large 5. Estimation of Selberg sums for the proof of (\ref{eq5}) and (\ref{eq6}) in case of a complex Hecke character $\psi$}
\end{center}
\label{sec:5}

It is known (see \cite{Hafner1} or  \cite{Rezvyakova_MZ}) that (\ref{eq5}) and (\ref{eq6}) are equivalent to  proper estimates of ``diagonal'' and ``non-diagonal'' terms, which, in turn, are reduced to a certain bound of  Selberg sum (see lemma \ref{l2} below) and to an additive-type problem with the coefficients of $L(s) = L_{\psi} (s)$. The last problem (in case of a complex Hecke character) is overcome, for example, in \cite{Rezvyakova_MZ} (lemma 4) and the proof is not sensitive to a different choice of the coefficients $\beta(\nu)$. 
Therefore, to justify (\ref{eq5}) and (\ref{eq6}) 
it is sufficient to prove the statement formulated in the next lemma. It is similar to the assertion of lemma 3 in \cite{Rezvyakova_MZ} but with  different values of $\beta(\nu)$ (compare our  notation (\ref{eq4}) 
and the notation (2.2) of  \cite{Rezvyakova_MZ}). 
\begin{lemma}
\label{l2} Let $0\le\theta\le\frac14$. Define $S(\theta)$ by
$$
S(\theta)=\sum_{\nu_1,\dots,\nu_4\le X}
\frac{\beta(\nu_1)\beta(\nu_2)\beta(\nu_3)\beta(\nu_4)}{\nu_2
\nu_4}\biggl(\frac q{\nu_1\nu_3}\biggr)^{1-\theta}K\biggl(
\frac{\nu_1\nu_4}q,1-\theta\biggr)K\biggl(\frac{\nu_2
\nu_3}q,1-\theta\biggr),
$$
where $q=(\nu_1\nu_4,\nu_2\nu_3)$ and
\begin{align*}
&K(m,s) = \prod_{p| m} \biggl( 1+ \frac{r^{2}(p)}{p^{s}} +
\frac{r^{2}(p^2)}{p^{2s}}+\dotsb\biggr)^{-1}
\\
&\qquad\qquad\qquad\qquad\qquad\qquad\times\prod_{p^{\alpha} \| m}
\biggl( r(p^{\alpha}) + \frac{r(p^{\alpha+1})r(p)}{p^{s}} +
\frac{r(p^{\alpha+2})r(p^2)}{p^{2s}}+\dotsb\biggr).
\end{align*}
Then the estimate
$$
S(\theta)\ll\frac{X^{2\theta}}{\log X}
$$
holds uniformly in~$\theta$.
\end{lemma}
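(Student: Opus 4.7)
I would follow Selberg's standard strategy for mollifier sums of this type: represent the weight $\EuScript{L}$ as a Mellin integral, convert $S(\theta)$ into a fourfold contour integral, analyse the resulting inner Dirichlet series via its Euler product, and then shift contours to extract the dominant zeta residue.

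First, I would use the Mellin representation
$$
\EuScript{L}(\nu) \;=\; \frac{2}{\log X}\cdot\frac{1}{2\pi i}\int_{(c)} \frac{X^{z}-X^{z/2}}{z^{2}}\,\nu^{-z}\,dz, \qquad c>0,
$$
whose integrand has a simple pole at $z=0$ (the numerator vanishes to first order, cancelling one power of $z$) and decays like $|z|^{-2}$ on vertical lines. Inserting this into each of the four factors $\beta(\nu_{k})=\alpha(\nu_{k})\EuScript{L}(\nu_{k})$ in $S(\theta)$ turns the quadruple sum into a fourfold contour integral whose inner series is
$$
U(\mathbf{z};\theta) \;=\; \sum_{\nu_{1},\ldots,\nu_{4}}\frac{\alpha(\nu_{1})\alpha(\nu_{2})\alpha(\nu_{3})\alpha(\nu_{4})}{\nu_{1}^{z_{1}+1-\theta}\,\nu_{2}^{z_{2}+1}\,\nu_{3}^{z_{3}+1-\theta}\,\nu_{4}^{z_{4}+1}}\; q^{1-\theta}\, K\!\left(\tfrac{\nu_{1}\nu_{4}}{q},1-\theta\right) K\!\left(\tfrac{\nu_{2}\nu_{3}}{q},1-\theta\right),
$$
absolutely convergent for $\Real z_{k}$ sufficiently large.

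Second, I would exhibit the Euler product $U(\mathbf{z};\theta)=\prod_{p}U_{p}(\mathbf{z};\theta)$. This factorisation is available because $\alpha$ is multiplicative---it arises from the local factors $(1-\gamma_{p}p^{-s})^{1/2}(1-\delta_{p}p^{-s})^{1/2}$ of $L_{\psi}^{-1/2}$ with $|\gamma_{p}|,|\delta_{p}|\in\{0,1\}$---and because $q$, $\nu_{1}\nu_{4}/q$, $\nu_{2}\nu_{3}/q$, and the kernels $K(\cdot,1-\theta)$ are all multiplicative in the quadruple $(\nu_{1},\nu_{2},\nu_{3},\nu_{4})$. A prime-by-prime computation using the Hecke recursion $r(p^{a+1})=r(p)r(p^{a})-\chi_{D}(p)r(p^{a-1})$ identifies
$$
U(\mathbf{z};\theta) \;=\; \zeta(1+z_{1}+z_{3}+2\theta)\cdot H(\mathbf{z};\theta),
$$
with $H$ an Euler product absolutely convergent in a neighbourhood of $\mathbf{z}=\mathbf{0}$. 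Thus $U$ has only the single simple pole along the hyperplane $z_{1}+z_{3}+2\theta=0$ in some region $\Real z_{k}\ge -c_{0}$ for a fixed small $c_{0}>0$.

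Third, I would shift all four contours to the left, combining the residues picked up along the way (from the simple poles of the Mellin kernels at each $z_{k}=0$ and from the zeta pole at $z_{1}+z_{3}=-2\theta$) with the shifted-contour remainder. Careful bookkeeping---in which the four mollifier prefactors $2/\log X$ combine with the $(\log X)/z$ behaviour of the kernels and with the $\zeta$-residue to produce a net saving of $(\log X)^{-1}$, while the factor $X^{2\theta}$ emerges from the $X^{z_{k}}$ terms evaluated on the locus where the pole is captured---yields the main term of order $X^{2\theta}/\log X$. The shifted-contour remainder is $O(X^{2\theta-c_{0}/2}/\log X)$, dominated by the main term; uniformity in $\theta\in[0,1/4]$ follows from uniform absolute convergence of $H$.

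The main obstacle is the prime-by-prime verification in step two. One must combine the square-root Euler coefficients giving $\alpha(p^{a})$, the combinatorial weights induced by $(q/\nu_{1}\nu_{3})^{1-\theta}$ and by the $p$-adic valuations of $q$, and the local kernels $K_{p}(\cdot,1-\theta)$ (themselves built from the Hecke recursion for $r(p^{a})$) into a single explicit rational function of $p^{-z_{k}}$ and $p^{-\theta}$, and show that all apparent higher-order singularities cancel to leave exactly the zeta-type pole along $z_{1}+z_{3}+2\theta=0$. This computation is delicate and parallels lemma 3 of \cite{Rezvyakova_MZ}, adapted to the present choice (\ref{eq4}) of $\beta$; precisely this adaptation is why the proof must be redone. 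Once the pole structure is in place, the final contour shift and residue evaluation are routine.
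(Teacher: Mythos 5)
Your step one (the Mellin representation of $\EuScript{L}$) is fine, but step two, on which everything else rests, is wrong. Because $\alpha(\nu)$ are the coefficients of $L_\psi(s)^{-1/2}$, the inner series $U(\mathbf{z};\theta)$ does \emph{not} factor as $\zeta(1+z_1+z_3+2\theta)$ times an Euler product absolutely convergent near $\mathbf{z}=0$. Keeping, in the local factor at $p$, the terms where $p$ divides exactly one or exactly two of the $\nu_k$ (using $\alpha(p)=-\tfrac12 r(p)$, $K(p,1-\theta)=r(p)+O(p^{-3/4})$, and noting that $q=p$ with weight $p^{1-\theta}$ precisely for the cross pairs $(\nu_1,\nu_2),(\nu_1,\nu_3),(\nu_4,\nu_2),(\nu_4,\nu_3)$), one finds that $\log U(\mathbf{z};\theta)$ equals, up to a function holomorphic near the origin, $-\tfrac12\bigl[P(1-\theta+z_1)+P(1-\theta+z_3)+P(1+z_2)+P(1+z_4)\bigr]+\tfrac14\bigl[P(1+z_1+z_2)+P(1-\theta+z_1+z_3)+P(1+\theta+z_2+z_4)+P(1+z_3+z_4)\bigr]$, where $P(s)=\sum_p r^2(p)p^{-s}\sim\log\frac{1}{s-1}$ by Rankin--Selberg. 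So $U$ has square-root \emph{zeros} along $z_2=0$, $z_4=0$, $z_1=\theta$, $z_3=\theta$, and quarter-power branch singularities along $z_1+z_2=0$, $z_3+z_4=0$, $z_1+z_3=\theta$, $z_2+z_4=-\theta$: branch points, not poles. There is no zeta residue to pick up; the hyperplane $z_1+z_3+2\theta=0$ is not among the singular loci (and a residue there would in any case produce $X^{-2\theta}$-type factors, whereas the true source of $X^{2\theta}$ is the singularities at $z_1=\theta$, $z_3=\theta$ created by the weights $\nu_1^{-(1-\theta)},\nu_3^{-(1-\theta)}$). The ``delicate cancellation to a single pole'' you defer to is not delicate but false, and the $1/\log X$ saving in the lemma is produced exactly by these half-integer powers --- erase them and the analytic mechanism of the bound disappears. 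Your step three therefore cannot be executed as described.

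For contrast, the paper's proof never touches a multivariable contour: Möbius inversion removes the gcd $q$ and factors the sum as $S(\theta)=\sum_{d\le X^2}\sum_{m\mid d}\mu(m)(d/m)^{1-\theta}g^2(d,m)$; after extracting common divisors, $g(d,m)$ reduces to one-variable sums $S_\theta(X,\gamma,N)$, and these are bounded (Lemma 2) by Perron's formula with a contour hugging the branch point of $D(s)^{-1/2}$, $D(s)=\sum_n r^2(n)n^{-s}$, giving $X^\gamma\sqrt{\log X}$; the piecewise-linear weight $\EuScript{L}$, written as $\frac{2}{\log X}\bigl(S_\theta(X/A,\gamma,N)-S_\theta(\sqrt{X}/A,\gamma,N)\bigr)$, converts this into a saving of $(\log X)^{-1/2}$ per variable, and the final $d$-sum costs one $\log X$ via $\sum_{n\le x}r^2(n)\ll x$. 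If you insist on a contour-integral proof you must work with the correct fractional-power singularity structure (keyhole/truncated Hankel contours around the branch loci in each variable), at which point you are essentially redoing the paper's one-variable Lemma 2 four times over.
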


First, we shall prove the following lemma.
\begin{lemma}\label{l22}
Let $X \ge 3$,
$$
S_{\theta} (X, \gamma, N) = \sum_{\substack{ \nu\le X
\\
(\nu, N)=1}} \frac{\alpha(\nu) K(\nu,
1-\theta)}{\nu^{1-\gamma}} \log \frac{X}{\nu}.
$$
For $0 \le \theta, \gamma \le 1/4$ the estimate  
$$
S_{\theta} (X, \gamma, N) \ll X^{\gamma} \sqrt{\log X}
\prod_{p|N} \biggl( 1+\frac{1}{p} \biggr)^2
$$
holds true.
\end{lemma}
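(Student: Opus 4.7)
My plan is to use a Mellin--Perron integral representation combined with contour deformation around a branch-point singularity of an auxiliary Dirichlet series, in the spirit of the Selberg--Delange method. Starting from the identity $\log(X/\nu) = \frac{1}{2\pi i} \int_{(c)} (X/\nu)^s s^{-2}\, ds$ for $c > 0$, I would interchange summation and integration to obtain
\[
S_\theta(X, \gamma, N) = \frac{1}{2\pi i} \int_{(c)} \frac{X^s}{s^2}\, \Phi(s)\, ds, \qquad c > \gamma,
\]
where $\Phi(s) = \sum_{(\nu, N) = 1} \alpha(\nu) K(\nu, 1-\theta)\, \nu^{-(1-\gamma+s)}$.

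The next step is to study $\Phi(s)$ via its Euler product. Since $\alpha(p) = -r_\psi(p)/2$ (leading coefficient of $L_\psi^{-1/2}$) and $K(p, 1-\theta) = r_\psi(p) + O(p^{-(1-\theta)})$, the Euler factor at a prime $p \nmid N$ equals $1 - |r_\psi(p)|^2/(2 p^{1-\gamma+s}) + O(p^{-2(1-\gamma+s)})$. Using the splitting behaviour of primes in $\mathbb Q(\sqrt{-D})$, the identity $r_\psi(p)^2 = r_{\psi^2}(p) + 2$ at split primes, and the non-principality of $\psi^2$ (valid since $\psi$ is complex and hence of order $>2$), one obtains
\[
\sum_p \frac{|r_\psi(p)|^2}{p^{1-\gamma+s}} = \log \zeta_K(1-\gamma+s) + O(1), \qquad s \to \gamma^+,
\]
where $\zeta_K(s) = \zeta(s) L(s, \chi_D)$ has a simple pole at $s = 1$. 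Hence one arrives at a factorisation
\[
\Phi(s) = \zeta_K(1-\gamma+s)^{-1/2}\, G(s, \theta, N),
\]
with $G$ analytic and non-vanishing in a neighbourhood of $s = \gamma$, bounded uniformly in $\theta \in [0, 1/4]$, the factor $\prod_{p|N}(1+1/p)^2$ arising as a uniform upper bound on the Euler factors removed by the coprimality condition. In particular, $\Phi(s)$ has a branch-point zero of order $1/2$ at $s = \gamma$.

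To conclude, I would deform the contour past the branch point at $s = \gamma$ via a Hankel contour and past the pole of $s^{-2}$ at $s = 0$ (when $\gamma > 0$) down to a line $\mathrm{Re}(s) = -\delta$. Standard Hankel evaluation yields an $O(X^\gamma (\log X)^{-3/2})$ contribution from the branch point, the residue at $s = 0$ contributes $O(\log X)$, and the shifted-line integral is $O(X^{-\delta})$. When $\gamma = 0$ the two singularities merge into an $s^{-3/2}$ branch singularity whose Hankel evaluation produces $O(\sqrt{\log X})$. Assembling all cases yields the required bound $|S_\theta(X, \gamma, N)| \ll X^\gamma \sqrt{\log X}\, \prod_{p | N}(1 + 1/p)^2$. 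The main technical obstacle is the uniformity in $\theta \in [0, 1/4]$: one must verify that the analytic factor $G(s, \theta, N)$ in the factorisation admits a uniform bound and analytic continuation on a fixed strip around $s = \gamma$, which requires careful bookkeeping of the small-prime contributions from $K(p^\alpha, 1-\theta)$ and of the higher-order correction terms $\alpha(p^k)$ for $k \ge 2$ at each prime.
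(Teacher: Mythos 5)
Your starting point coincides with the paper's (Perron's formula with the kernel $X^s/s^2$, and an Euler-product factorization that extracts a square-root-type singularity at the edge of absolute convergence), but the contour treatment contains a genuine gap. You propose to continue $\Phi$ across the line $\Real(1-\gamma+s)=1$ and to push the contour to a fixed line $\Real s=-\delta$, with a Hankel loop at $s=\gamma$ and an error $O(X^{-\delta})$ from the shifted line. That step requires a single-valued, controlled branch of $\zeta_K(w)^{-1/2}$ — equivalently, zero-freeness of $\zeta(w)L(w,\chi_D)$ — in a fixed strip $1-\delta'\le\Real w\le 1$, which is not known: every zero of $\zeta_K$ in the critical strip is a branch point of $\Phi$, so the deformation as described is unavailable unconditionally. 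A correct execution of your route would have to work inside the classical zero-free region of width $\asymp 1/\log(|t|+2)$, with a truncated Perron formula and separate treatment of the real zeros of $L(s,\chi_D)$ for the fixed discriminant — far heavier machinery than the statement needs. In addition, your bookkeeping is not uniform in $\gamma$: the residue at $s=0$ equals $\Phi(0)\log X+\Phi'(0)$, and a bare $O(\log X)$ exceeds the target $X^{\gamma}\sqrt{\log X}$ when $\gamma\ll 1$ (say $\gamma\asymp 1/\log X$); to save the argument one must use $\Phi(0)\ll\gamma^{1/2}$, track the factor $|s|^{-2}\asymp\gamma^{-2}$ on the Hankel loop, and treat the coalescence of the two singularities for all small $\gamma$, not only at $\gamma=0$.

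By contrast, the paper never crosses the $1$-line. Writing $S_{\theta}(X,\gamma,N)=\frac{X^{\gamma}}{2\pi i}\int X^{s}(s+\gamma)^{-2}\,(D(s+1))^{-1/2}N_1(s+1,\theta)G_N(s+1)\,ds$ with $D(s)=\sum_n r^2(n)n^{-s}$, it deforms only to the contour consisting of the semicircle $|s|=(\log X)^{-1}$, $\Real s\ge 0$, and the rays $s=it$, $|t|\ge(\log X)^{-1}$, which stays in the closed half-plane $\Real(s+1)\ge 1$. There the only analytic input is $|D(w)|^{-1}\ll|w-1|$ for $\Real w\ge 1$ (non-vanishing on the $1$-line plus the simple pole), and the bound $X^{\gamma}\sqrt{\log X}\prod_{p\mid N}(1+1/p)^2$ follows uniformly in $\theta$ and $\gamma$. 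Since the lemma asks only for an upper bound with a $\sqrt{\log X}$ loss, this boundary argument suffices and bypasses all the continuation and zero-free-region issues your proposal would have to confront; I recommend reworking your proof along those lines rather than via Selberg--Delange asymptotics.
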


\begin{proof}
The proof is carried out 
essentially the same way as the proof of lemma 3 in \cite{Rezvyakova_MZ}. For $\Real s
>1$, consider the generating function
\begin{equation*}
\begin{split}
h_{\theta, N} (s) &= \sum\limits_{(n, N)=1} \frac{\alpha(n) K(n,
1-\theta)}{n^{s}} = \prod\limits_{p\nmid N} \left( 1 +
\frac{\alpha(p) K(p, 1-\theta)}{p^{s}} + \frac{\alpha(p^2) K(p^2, 1-\theta)}{p^{2s}} +\ldots \right) \\
&= \prod_{p} \left( 1 +
\frac{r^2(p)}{p^{s}}+
\frac{r^2(p^2)}{p^{2s}}+ \ldots\right)^{-1/2}\times \prod\limits_{p\mid N} \left( 1 +
\frac{r^2(p)}{p^{s}}+ \frac{r^2(p^2)}{p^{2s}}+
\ldots\right)^{1/2} \times \\
& \times \prod_{
(p, N)=1} \left( 1 + \frac{r^2(p)}{p^{s}}+
\frac{r^2(p^2)}{p^{2s}}+ \ldots\right)^{1/2} \times \\
&\times \left( 1 + \frac{\alpha(p) K(p, 1-\theta)}{p^{s}} + \frac{\alpha(p^2) K(p^2, 1-\theta)}{p^{2s}} +\ldots \right) .
\end{split}
\end{equation*}
If $0 \le \theta \le 1/4$, $s = \sigma+it$, the product
\begin{equation*}
\begin{split}
N_1(s,\theta) &=\prod_{
(p, N)=1} \left( 1 + \frac{r^2(p)}{p^{s}}+
\frac{r^2(p^2)}{p^{2s}}+ \ldots\right)^{1/2} \times \\
&\times \left( 1 + \frac{\alpha(p) K(p, 1-\theta)}{p^{s}} + \frac{\alpha(p^2) K(p^2, 1-\theta)}{p^{2s}} +\ldots  \right) \\
&= \prod\limits_{(p, N)=1} \left( 1 +
\frac{r^2(p)}{2 p^{s}} + O\left( \frac{1}{p^{2\sigma}}\right)
\right) \left( 1 -
\frac{r^2(p)}{2 p^{s}} + O\left( \frac{1}{p^{\sigma+1-\theta}}\right) + O\left( \frac{1}{p^{2\sigma}}\right)\right)\\
&= \prod\limits_{ (p, N)=1} \left( 1 + O\left(
\frac{1}{p^{2\sigma}}\right) + O\left(
\frac{1}{p^{\sigma+1-\theta}}\right)\right)
\end{split}
\end{equation*}
defines an analytic function in the half-plane $\Real s
> 1/2$. We then have the following identity for 
the generating function in the region $\Real s \ge 1$
\begin{equation*}
\begin{split}
h_{\theta, N} (s) = (D(s))^{-1/2} N_1(s, \theta) G_{N} (s),
\end{split}
\end{equation*}
where 
$$
D(s) = \sum\limits_{n=1}^{+\infty} \frac{r^2(n)}{n^s},
$$
$$
G_{N} (s) = \prod\limits_{p\mid N} \left( 1 +
\frac{r^2(p)}{p^{s}}+ \frac{r^2(p^2)}{p^{2s}}+
\ldots\right)^{1/2}.
$$

An application of  Perron's summation formula gives
\begin{equation*}
\begin{split}
S_{\theta}(X,\gamma,N) &= \frac{1}{2\pi i}
\int\limits_{1-i\infty}^{1+i\infty} h_{\theta, N} (s+1-\gamma)
\frac{X^{s}}{s^2} ds \\
&= \frac{X^{\gamma}}{2\pi i}\int\limits_{1-i\infty}^{1+i\infty}
\frac{X^{s}}{(s+\gamma)^2}\frac{N_1 (s+1, \theta) G_{N} (s+1)
}{\left( D(s+1) \right)^{1/2}} ds.
\end{split}
\end{equation*}
For $\Real s \ge 1$, the following estimates hold:
$$
N_1(s, \theta) \ll 1 \quad \text{for} \quad 0 \le \theta \le 1/4,
$$
$$
G_{N} (s) \ll \prod_{p|N} \left( 1+\frac{1}{p}\right)^2 := G_N.
$$
Let us move the path of integration from the line $\Real s =1$ to the contour,
constructed by the semicircle $\{|s|=(\log X)^{-1},  \Real s \ge 0
\}$ and by the two rays $\{ s= it, |t| \ge (\log X)^{-1} \}$. In the region $\Real s \ge 1$ we shall use the estimate $|D(s)|^{-1}
\ll |s-1|$. The corresponding integral over the semicircle (which we denote by
$K_1$)  can be estimated as follows
\begin{equation*}
\begin{split}
K_1 \ll X^{\gamma} G_{N} \frac{X^{(\log X)^{-1}} (\log^{-1}
X)^{3/2}}{(\log^{-1} X)^2} \ll X^{\gamma} (\log X)^{1/2}
G_{N}.
\end{split}
\end{equation*}
The contribution of the corresponding integral over the two rays (which we denote by $K_2$) is also of the 
required order: 
$$
K_2 \ll X^{\gamma} G_{N} \int\limits_{(\log  X)^{-1}}^{+\infty}
\frac{t^{1/2}}{t^2} dt \ll X^{\gamma} (\log X)^{1/2} G_{N}. 
$$
Thus, the lemma is proved.

\end{proof}

\begin{pfl1}
Using the M\"obius inversion formula,
\begin{equation*}
f(q) = \sum_{d|q} \sum_{m|d} \mu(m) f\biggl(\frac{d}{m}\biggr),
\end{equation*}
we get
\begin{align*}
q^{1-\theta} K\biggl( \frac{\nu_1 \nu_4}{q}, 1-\theta\biggr)
K\biggl( \frac{\nu_2 \nu_3}{q}, 1-\theta\biggr)&= \sum_{d|q}
\sum_{m|d} \mu(m) \biggl( \frac{d}{m} \biggr)^{1-\theta} \times
\\
&\times K\biggl( \frac{\nu_1 \nu_4 m}{d}, 1-\theta\biggr) K\biggl(
\frac{\nu_2 \nu_3 m}{d}, 1-\theta\biggr).
\end{align*}
Substituting this relation into the expression for~$S(\theta)$, we
find
\begin{align*}
S(\theta) &= \sum_{\nu_1,\dots, \nu_4 \le X} \frac{\beta(\nu_1)
\beta(\nu_2) \beta(\nu_3) \beta(\nu_4)}{ \nu_2 \nu_4
(\nu_1\nu_3)^{1-\theta}} \sum_{d|q} \sum_{m|d} \mu(m) \biggl(
\frac{d}{m} \biggr)^{1-\theta}
\\
&\quad\times K\biggl( \frac{\nu_1 \nu_4 m}{d}, 1-\theta\biggr)
K\biggl( \frac{\nu_2 \nu_3 m}{d}, 1-\theta\biggr) = \sum_{d \le X^2}
\sum_{m|d} \mu(m) \biggl( \frac{d}{m} \biggr)^{1-\theta} g^2 (d,m),
\end{align*}
where
$$
g(d,m)=\sum_{\substack{\nu_1\nu_4\equiv0\ (\operatorname{mod}d)
\\
\nu_j\le X}}\frac{\beta(\nu_1)\beta(\nu_4)}{\nu_1^{1-\theta}
\nu_4}K\biggl(\frac{\nu_1\nu_4m}d,1-\theta\biggr).
$$

Representing $\nu_j$ in the form $\nu_j=\delta_j\nu'_j$, where
$(\nu'_j,d)=1$ and all prime divisors of~$\delta_j$ 
are contained among 
those of~$d$ (that is, $\delta_j | d^{\infty}$), we obtain
\begin{align*}
g(d,m) &= \sum_{\substack{\delta_1 \delta_4 \equiv0 \
(\operatorname{mod}d)
\\
\delta_j | d^{\infty}}} \frac{1}{\delta_1^{1-\theta} \delta_4}
\sum_{\substack{\delta_j \nu_j \le X
\\
(\nu_j, d)=1}} \frac{\beta(\delta_1 \nu_1) \beta(\delta_4
\nu_4)}{\nu_1^{1-\theta} \nu_4} K\biggl( \frac{\delta_1 \delta_4
m}{d} \nu_1 \nu_4, 1-\theta\biggr).
\end{align*}
By the definition of $\beta(\nu)$, it follows that
\begin{align*}
g(d,m) &= \sum_{\substack{\delta_1 \delta_4
\equiv0 \ (\operatorname{mod}d)
\\
\delta_j | d^{\infty}}} \frac{\alpha(\delta_1)
\alpha(\delta_4)}{\delta_1^{1-\theta} \delta_4} K\biggl(
\frac{\delta_1 \delta_4 m}{d}, 1-\theta\biggr)
\\
&\times\sum_{\substack{\nu_j \le X/\delta_j
\\
(\nu_j,d) =1 }} \frac{\alpha(\nu_1) \alpha(\nu_4)}{\nu_1^{1-\theta}
\nu_4} K(\nu_1 \nu_4, 1-\theta) \EuScript{L} (\delta_1 \nu_1)
\EuScript{L}(\delta_4 \nu_4).
\end{align*}
Denote the inner sum in the last equality by $S$. Let 
$$
(\nu_1, \nu_4) = n, \quad \nu_1 = n l_1,  \quad \nu_4 = n l_4,  \quad (l_1, l_4)=1.
$$ 
Then 
\begin{align*}
S &= \sum\limits_{(n,d)=1} \frac{1}{n^{2-\theta}} \sum_{\substack{l_j \le X/\delta_j n
\\
(l_j,d) =1 , \\ (l_1, l_4)=1}} \frac{\alpha(n l_1) \alpha(n l_4)}{l_1^{1-\theta}
l_4} K(n^2 l_1 l_4, 1-\theta) \EuScript{L} (\delta_1 n l_1)
\EuScript{L}(\delta_4 n l_4)\\
&= \sum\limits_{(n,d)=1} \frac{1}{n^{2-\theta}} \sum_{\substack{k_1, k_4 | n^{\infty}, \\ (k_1, k_4)=1}} \frac{\alpha(n k_1) \alpha(n k_4) K(n^2 k_1 k_4, 1-\theta)}{k_1^{1-\theta} k_4}  \\
&\times\sum_{\substack{l_j \le X/\delta_j k_j n
\\
(l_j,nd) =1 , \\ (l_1, l_4)=1}} \frac{\alpha(l_1) \alpha(l_4)}{l_1^{1-\theta}
l_4} K(l_1, 1-\theta) K( l_4, 1-\theta) \EuScript{L} (\delta_1 n k_1 l_1)
\EuScript{L}(\delta_4 n k_4  l_4).
\end{align*}
We now use the following identity:
\begin{align*}
&\sum_{(l_1,l_2)=1} f(l_1,l_2) =\sum_{l_1,l_2} f(l_1,l_2) \sum\limits_{r \mid (l_1, l_2)} \mu(r) = 
\sum\limits_{r} \mu(r) \sum_{l_1, l_2} f(r l_1, r l_2).
\end{align*}
Whence, {\allowdisplaybreaks
\begin{align*}
S&= \sum\limits_{(n,d)=1} \frac{1}{n^{2-\theta}} \sum\limits_{(r, nd)=1}  \frac{\mu(r)}{r^{2-\theta}}\sum_{\substack{k_1, k_4 | n^{\infty}, \\ (k_1, k_4)=1}} \frac{\alpha(n k_1) \alpha(n k_4) K(n^2 k_1 k_4, 1-\theta)}{k_1^{1-\theta} k_4}  \\
&\times\sum_{\substack{l_j \delta_j k_j n r \le X
\\
(l_j,nd) =1}} \frac{\alpha(r l_1) \alpha(r l_4)}{l_1^{1-\theta}
l_4} K(r l_1, 1-\theta) K( r l_4, 1-\theta)  \EuScript{L} (l_1 \delta_1 k_1 n r)
\EuScript{L}( l_4 \delta_4  k_4 n r )\\
&= \sum\limits_{(n,d)=1} \frac{1}{n^{2-\theta}} \sum\limits_{(r, nd)=1}  \frac{\mu(r)}{r^{2-\theta}}\sum_{\substack{k_1, k_4 | n^{\infty}, \\ (k_1, k_4)=1}} \frac{\alpha(n k_1) \alpha(n k_4) K(n^2 k_1 k_4, 1-\theta)}{k_1^{1-\theta} k_4}  \\
&\times \sum_{l_j | r^{\infty}} \frac{\alpha(r l_1) \alpha(r l_4)}{l_1^{1-\theta}
l_4} K(r l_1, 1-\theta) K( r l_4, 1-\theta) \\
&\times \sum_{\substack{\nu_j \delta_j k_j l_j n r \le X
\\
(\nu_j,ndr) =1}} \frac{\alpha(\nu_1) \alpha(\nu_4)}{\nu_1^{1-\theta}
\nu_4} K(\nu_1, 1-\theta) K( \nu_4, 1-\theta)  \EuScript{L} (\nu_1 l_1 \delta_1 k_1 n r)
\EuScript{L}( \nu_4 l_4 \delta_4  k_4 n r ) .
\end{align*}
}
Set 
\begin{align}
\notag S_{\theta}(X, A,\gamma,N) &= \sum_{\substack{\nu\le X/A
\\
(\nu, N)=1}} \frac{\alpha(\nu) K(\nu,
1-\theta)}{\nu^{1-\gamma}} \EuScript{L}(A \nu). 
\end{align}
From the definition of $\EuScript{L}(\nu)$ we obtain 
$$
S_{\theta}(X, A,\gamma,N) = \sum_{\substack{1 \le \nu < \sqrt{X}/A
\\
(\nu, N)=1}} \frac{\alpha(\nu) K(\nu,
1-\theta)}{\nu^{1-\gamma}}  + \sum_{\substack{\sqrt{X}/A \le \nu\le X/A
\\
(\nu, N)=1}} \frac{\alpha(\nu) K(\nu,
1-\theta)}{\nu^{1-\gamma}} 2 \frac{\log \frac{X}{A\nu}}{\log X}.
$$
For $\nu < \sqrt{X}/A$, using the identity
$$
2 \frac{\log \frac{X}{A\nu}}{\log X} - 2 \frac{\log \frac{\sqrt{X}}{A\nu}}{\log X} =1,
$$
we find that 
$$
S_{\theta}(X, A,\gamma,N) = \frac{2}{\log X} \left( S_{\theta}(X/A,\gamma,N)-S_{\theta}(\sqrt{X}/{A},\gamma,N) \right),
$$
where the function $S_{\theta} (X, \gamma, N)$ of three variables is defined in lemma \ref{l22}.

Thus, we finally obtain:
\begin{align*}
g(d, m) &\ll \frac{X^{\theta}}{\log X}  \prod_{p|d} \biggl( 1+\frac{1}{p} \biggr)^4\sum_{\substack{\delta_1 \delta_4 \equiv0 \ (\operatorname{mod}d)
\\
\delta_j | d^{\infty}}}\!\! \frac{|\alpha(\delta_1)
\alpha(\delta_4)|}{\delta_1 \delta_4} K\biggl( \frac{\delta_1
\delta_4 m}{d}, 1-\theta\biggr)  \\
&\le  \frac{X^{\theta}}{\log X}   \frac{1}{d} \prod_{p|d} \biggl( 1+\frac{1}{p} \biggr)^4 \sum_{n| d^{\infty}} \frac{K(nm, 1-\theta)}{n}  \sum_{\substack{\delta_1 \delta_4 = nd
\\
\delta_j | d^{\infty}}}\!\! |\alpha(\delta_1)
\alpha(\delta_4)|.
\end{align*}
Since $\alpha(n)$ is a multiplicative function, the function  
$$
b(n) = \sum\limits_{n_1 n_2 =n} |\alpha
(n_1) \alpha(n_2)|
$$ 
is also multiplicative. 
Thereby, using the estimate 
\begin{equation}\label{eq30}
K(n,1-\theta)\ll\tau (n) \text{ for } 0 \le \theta \le 1/2
\end{equation}
(which follows from the definition of $K(\cdot, 1-\theta)$ and the estimate $|r(n)| \le \tau(n)$), we find that
\begin{align*}
g(d, m) &\ll  \frac{X^{\theta}}{\log X}   \tau (m)  \frac{1}{d} \prod_{p|d} \biggl( 1+\frac{1}{p} \biggr)^4 
\sum_{n | d^{\infty}} \frac{K(n, 1-\theta)}{n}  b(nd) \\
&\ll \frac{X^{\theta}}{\log X}   \tau (m)  \frac{1}{d} \prod_{p^{\beta}||d} \biggl( 1+\frac{1}{p} \biggr)^4 \left( b(p^{\beta}) + \frac{b(p^{\beta+1}) K(p, 1-\theta) }{p} + \frac{b(p^{\beta+2}) K(p^2, 1-\theta) }{p^2} +\ldots\right).
 \end{align*}

Let us estimate the values $b(n)$ from above. From the definition of the coefficients $\alpha(n)$ we have:
\begin{align*}
&\sum\limits_{n=1}^{+\infty} \frac{\alpha(n)}{n^s} = L^{-1/2} (s) = \prod\limits_{p} \left( 1 - \frac{r(p)}{p^s} + \frac{\chi_{D} (p)}{p^{2s}}\right)^{1/2} = \prod\limits_{p} \left( 1 - \frac{\gamma_{p}}{p^s} \right)^{1/2} 
\left( 1 - \frac{\delta_{p}}{p^s} \right)^{1/2}, 
\end{align*}
where $|\gamma_{p}|, |\delta_{p}| \in\{ 0, 1 \}$.
Therefore, the second identity yields the estimate 
$$
b(p) = |r(p)|, 
$$
and the last identity implies, that $b(n)$ can be also majorized by the coefficients of the series, defined by the following Euler product:
$$
\prod_{p} \biggl( 1
-\frac{1}{p^s} \biggr)^{-2} = \sum\limits_{n=1}^{+\infty} \frac{\tau(n)}{n^s}.
$$
Thus, 
\begin{equation*}
b(n) \le \tau(n).
\end{equation*}
We define the multiplicative function $B(n)$ by the relations 
\begin{equation}\label{eq31}
B(p) = |r(p)|, \quad B(p^{\beta}) = \tau(p^{\beta}) \text{ for } \beta >1.
\end{equation}
Hence, from (\ref{eq30}),  (\ref{eq31}) we obtain 
\begin{align*}
S(\theta) &\ll \frac{X^{2\theta}}{\log^2 X}\!\! \sum_{d
\le X^2} \frac{1}{d^{1+\theta}} \prod_{p^{\beta}||d} \biggl(
1+\frac{1}{p}\biggr)^{8} \left( B(p^{\beta}) + \frac{B(p^{\beta+1}) K(p, 1-\theta) }{p} + \frac{B(p^{\beta+2}) K(p^2, 1-\theta) }{p^2} +\ldots \right)^2 \times \\
&\times \sum_{m|d} \frac{\mu^2(m) \tau^2(m)}{m^{1-\theta}}
\ll \frac{X^{2\theta}}{\log^2 X}\sum_{d \le X^2}
\frac{1}{d} \prod_{p^{\beta}||d}  \left( B(p^{\beta}) + \frac{B(p^{\beta+1}) C}{p} \right)^2  \biggl(
1+\frac{1}{p}\biggr)^{8} \biggl( 1+ \frac{\tau^2(p)}{p^{1-\theta}} \biggr) \\
&\ll \frac{X^{2\theta}}{\log^2 X}\sum_{d \le X^2}
\frac{1}{d} \prod_{p^{\beta}||d}  \left( B(p^{\beta}) + \frac{B(p^{\beta+1}) C}{p} \right)^2  \biggl(
1+\frac{1}{\sqrt{p}}\biggr)
\end{align*}
for some positive constant $C$. 

To estimate the last sum over $d$, 
let us write the following chain of formal equalities:
\begin{equation*}
\begin{split}
&\sum_{d=1}^{+\infty}
\frac{1}{d^s} \prod_{p^{\beta}||d}  \left( B(p^{\beta}) + \frac{B(p^{\beta+1}) C}{p} \right)^2  \biggl(
1+\frac{1}{\sqrt{p}}\biggr)  \\
&= \prod_{p}
\left( 1+\frac{1}{p^s} \left( 1+ \frac{1}{\sqrt{p}}\right) \left( |r (p)|+ \frac{\tau(p^2) C}{p}\right)^2 +\frac{1}{p^{2s}} \left( 1+ \frac{1}{\sqrt{p}}\right) \left( \tau (p^2) + \frac{\tau(p^3) C}{p}\right)^2+\ldots \right)\\
&=
\sum\limits_{d=1}^{+\infty} \frac{1}{d^s} \left( \sum\limits_{d_1
d_2 =d} b_1 (d_1) b_2 (d_2)\right),
\end{split}
\end{equation*}
where
\begin{equation*}
\begin{split}
\sum\limits_{n=1}^{+\infty} \frac{b_1 (n)}{n^s} &= \prod_{p}
\left( 1+\frac{r^2(p)}{p^s}\right), \\
\sum\limits_{n=1}^{+\infty} \frac{b_2 (n)}{n^s} &= \prod_{p}
\left(
1+\frac{r^2(p)}{p^{s+\frac12} (1+\frac{r^2(p)}{p^s})}+ \frac{\left( 1+ \frac{1}{\sqrt{p}}\right) \left( 2|r (p)|\tau(p^2) C + \frac{\tau^2(p^2) C^2}{p}\right) }{p^{s+1} (1+\frac{r^2(p)}{p^s})}\right. \\
&\left. + \frac{\left( 1+ \frac{1}{\sqrt{p}}\right) \left( \tau (p^2) + \frac{\tau(p^3) C}{p}\right)^2}{p^{2s}\left( 1+\frac{r^2(p)}{p^s} \right)}+\ldots \right) .
\end{split}
\end{equation*}

Thus, 
$$
S(\theta)\ll \frac{X^{2\theta}}{\log^2 X}\sum_{d_1 d_2 \le X^2}
\frac{b_1 (d_1) b_2 (d_2)}{d_1 d_2}.
$$
We shall estimate the latter sum in the following way:
\begin{equation*}
\begin{split}
&\sum\limits_{d_1 d_2 \le X^2}
\frac{b_1 (d_1) b_2 (d_2)}{d_1 d_2} 
\le  \sum\limits_{n \le X^2}
\frac{r^2(n)}{n}  
\prod_{p} 
\left(
1+\frac{r^2(p)}{p^{\frac32} (1+\frac{r^2(p)}{p})} \right. \\
&\left. +\frac{\left( 1+ \frac{1}{\sqrt{p}}\right) \left( 2|r (p)|\tau(p^2) C + \frac{\tau^2(p^2) C^2}{p}\right) }{p^{2} (1+\frac{r^2(p)}{p})}
+ \frac{\left( 1+ \frac{1}{\sqrt{p}}\right) \left( \tau (p^2) + \frac{\tau(p^3) C}{p}\right)^2}{p^{2}\left( 1+\frac{r^2(p)}{p} \right)}+\ldots \right) \\
& \ll \sum\limits_{n \le X^2}
\frac{r^2(n)}{n} 
\ll \log X
\end{split}
\end{equation*}
in view of the bound 
$$
\sum\limits_{n\le x} |r(n)|^2 \ll  x 
$$
(see \cite{Hecke1937} or \cite{Rankin1939}). 
Thus, 
\begin{align*} 
S(\theta)
&\ll \frac{X^{2\theta}}{\log X}
\end{align*}
and the lemma is proved.

\end{pfl1}

\begin{center}
{\bf \Large 6. The distribution of the values of $L_{\psi}(s)$ on the critical line}
\label{sec:6}
\end{center}

The core of the proof of the value distribution formula (\ref{eq8}) are the following mean estimates:
\begin{equation}\label{eq12}
\int\limits_{T}^{2T} \left| \log |L_{\psi}(1/2+it)| - \Real \sum\limits_{p < z} \frac{r_{\psi}(p)}{p^{1/2+it}}\right|^{2k} dt = O(T k^{4k} e^{Ak})
\end{equation}
where $\frac{\log z}{\log T} \asymp \frac{1}{k}$. 
As in the case of the Riemann zeta-function, to prove the above estimates we need a special asymptotic expansion of $\log L_{\psi}(s)$ on the critical line. One could come to it in few steps. 

First, the Euler product factorization gives the identity 
$$
\log L_{\psi}(s) = \sum\limits_{n=2}^{+\infty} \frac{\Lambda_{\psi} (n)}{\log n} n^{-s}, \quad \Real s >1.
$$ 
Using this formula and contour integration, one can obtain the following relation for $t \ge 2$ ($t \ne \Image \rho$), $2\le x\le t^2$:
\begin{equation*}
\begin{split}
&\log L_{\psi}(1/2 +it) =  \sum\limits_{n \le x^3} \frac{\Lambda_{x,\psi} (n)}{\log n} n^{-\sigma_{x,t} - it} \\
&+  O \left( \left(\sigma_{x,t} - \frac12 \right)  \left( 1+ \log^{+}\frac{1}{\eta_t \log x}  + \left(\sigma_{x,t} - \frac12 \right)  \log x\right) \left(  \left|\sum\limits_{n \le x^3} \frac{\Lambda_{x,\psi} (n)}{\log n} n^{-\sigma_{x,t} - it} \right| + \log t \right) \right),
\end{split}
\end{equation*}
where 
$$
\eta_t = \min_{\rho= \beta + i\gamma} |t-\gamma|,
$$
$$
\sigma_{x,t}  = \frac12 +2\max_{\rho} \left( \beta - \frac12, \frac{2}{\log x} \right)
$$
and maximum is taken over all zeros $\rho = \beta +i\gamma$ of $L_{\psi}(s)$ satisfying the following conditions 
$$
|t-\gamma| \le \frac{x^{3 \left( \beta -\frac12\right)}}{\log x}, \quad \beta \ge \frac12
$$
(we refer the reader to the articles \cite{Selberg1946}, \cite{Tsang}, \cite{Sank} for more details). 
Finally, one can prove (\ref{eq12}) for positive integer $k$ using the density theorem  (\ref{eq14}). 
The initial statement (\ref{eq8}) is now a consequence (following the methods of 
\cite{Ghosh} or \cite{Tsang}) of (\ref{eq12}) and the 
asymptotic formula
\begin{equation*}
\begin{split}
\sum\limits_{p \le z} \frac{(r_{\psi} (p)- r_{\psi'} (p))^2}{p} &= \sum\limits_{p \le z} \frac{r^2_{\psi} (p)}{p} + \sum\limits_{p \le z} \frac{r^2_{\psi'} (p)}{p} -\sum\limits_{p \le z} \frac{r_{\psi\cdot \psi'} (p) + r_{\psi\cdot  \overline{\psi'}} (p)}{p} \\
&= (n_{\psi}+ n_{\psi'}) \log\log z + O(1)
\end{split}
\end{equation*}
which is valid for $\psi \ne \psi', \overline{\psi'}$ (which we can suppose, since otherwise $L_{\psi} (s) = L_{\psi'} (s) $).

We shall also explain in this section how to prove the estimate (\ref{eq9}).
Define by $Err_{\psi} (t)$ the difference
\begin{equation*}
Err_{\psi} (t) = \log L_{\psi}\left(\frac12 +it \right) - \sum\limits_{p < z} \frac{r_{\psi} (p)}{p^{\frac12+it}}. 
\end{equation*}
Its contribution to (\ref{eq9}) 
is as small as in (\ref{eq12}), since
\begin{equation*}
\int\limits_{T}^{2T} \left| \frac{1}{H} \int\limits_{t}^{t+H} Err (u) du \right|^{2k} dt \ll 
 \frac{1}{H^{2k}} \int\limits_{T}^{2T} H^{2k-1} \int\limits_{t}^{t+H} \left| Err (u) \right|^{2k} du   dt\ll \int\limits_{T}^{2T+1} \left| Err (t) \right|^{2k} dt \ll 
T k^{4k} e^{Ak} .
\end{equation*}
Now we consider the contribution to (\ref{eq9}) coming from the main term, namely
\begin{equation*}
\int\limits_{T}^{2T} \left| \sum\limits_{p<z} \frac{r_{\psi}(p)}{p^{1/2+it}} \left( \frac{1}{H} \int\limits_{0}^{H} \frac{du}{p^{iu}} - p^{-iu_1} \right) \right|^{2k} dt
\end{equation*}
 with $0\le u_1 \le H$, which in turn depends on the diagonal part 
\begin{equation*}
\sum\limits_{p<z} \frac{r_{\psi}^2 (p)}{p} \left| \frac{1}{H} \frac{p^{-iH} -1}{-i \log p} - p^{-iu_1} \right|^2.
\end{equation*}
We divide the last sum into two and collect in the first sum $S_1$ the terms with $p< e^{1/H}$ and in the second sum $S_2$ the remaining terms. Thus, 
\begin{equation*}
\begin{split}
S_1 &\ll \sum\limits_{p< e^{1/H}} \frac{r_{\psi}^2 (p)}{p} \left|\frac{-1+ (1-iH\log p - H^2/2\log^2 p)}{-i H\log p} - (1 -iu_1\log p) + O(H^2\log^2 p) \right|^2 \\
&\ll \sum\limits_{p< e^{1/H}} \frac{r_{\psi}^2 (p)}{p} H^2 \log^2 p \ll H^2  \sum\limits_{p< e^{1/H}} \frac{\log^2 p}{p}  \ll H \sum\limits_{p< e^{1/H}} \frac{\log p}{p}  \ll 1.
\end{split}
\end{equation*}
For $S_2$ we have the following estimate
\begin{equation*}
S_2 \ll \sum\limits_{ e^{1/H} \le p <z}  \frac{r_{\psi}^2 (p)}{p} \ll  \sum\limits_{ e^{1/H} \le p <z}  \frac{1}{p} \ll \log (H \log z) \ll \log (H\log T).
\end{equation*}
And, finally, (\ref{eq9}) can be obtained.

\section*{Acknowledgment}

{I would like to thank many people without whom this work may not appear. 
First of all, I am thankful to Michel Balazard and the Russian-French Laboratorie J.-V. Poncelet of Moscow Center for Continuous Mathematical Education for organizing the series of conferences on ``Zeta-functions'', and especially to Eric Saias for sharing with me the notes of Atle Selberg \cite{Selberg1999}, I am thankful to Professor Matti Jutila for giving me the references to the papers solving the convolution-type problems, to Professor Haseo Ki for the opportunity to participate in the conference ``Zeta function days in Seoul'' in 2009 where I met 
several dozens of giants in analytic number theory, to Professor Kai-Man Tsang for sharing the text of his Ph.D thesis and 
for series of conferences on Number theory organized in Hong Kong, 
to Professor Aleksandar Ivi\'c for a great support and to many friends (especially to Leo Murata and Maxim Korolev) for their interest on my research.}

\newpage

\hspace{5mm}
\begin{flushleft}
{\it Irina Rezvyakova \\
Department of Algebra and Number Theory \\
Steklov Mathematical Institute\\
Moscow, Russia\\
irezvyakova@gmail.com, rezvyakova@mi.ras.ru}
\end{flushleft}

\begin{thebibliography}{99}

\bibitem{Selberg1942}
\textit{A.~Selberg}, On the zeros of Riemann's
zeta-function,  Skr. Norske Vid. Akad. Oslo. \textbf{10} (1942), 1--59.

\bibitem{Selberg_Amalfi}
\textit{A.~Selberg},
Collected papers, vol.~II, Old and new conjectures
about class of Dirichlet series, Springer-Verlag, Berlin 1991, 47--63.

\bibitem{Richert}
\textit{ H.~Richert},  \"Uber Dirichletreihen mit Funktionalgleichung, Acad. Serbe Sci. Publ. Inst. Math.   \textbf{11} (1957), 73--124.

\bibitem{KP}
\textit{J.~Kaczorowski} and \textit{A.~Perelli}, On the structure of the Selberg class, VII: $1<d<2$,  Ann. Math.  \textbf{173:3} (2011), 1397--1441.


\bibitem{Hafner1}
\textit{J.~L.~Hafner}, Zeros on the critical line of
Dirichlet series attached to certain cusp forms, Math. Ann. \textbf{264} (1983), 21--37.

\bibitem{Hafner2}
\textit{J.~L.~Hafner}, Zeros on the critical line for Maass wave form L-functions,   J.
reine angew. Math. \textbf{377} (1987), 127--158.

\bibitem{Iwaniec_book}
\textit{H.~Iwaniec},
Topics in classical
automorphic forms, AMS: Graduate studies in Math., vol. \textbf{17}, 1997.

\bibitem{Rezvyakova_MZ}
\textit{I.~S.~Rezvyakova}, On the zeros on the critical line of L-functions corresponding to automorphic cusp forms,  Math. Notes. \textbf{88:3} (2010), 423--439.


\bibitem{Dav_Heil}
\textit{H.~Davenport} and \textit{H.~Heilbronn}, On the zeros of
certain Dirichlet series,  J. Lon. Math. Soc. \textbf{11} (1936), 181--185, 307--312.


\bibitem{Voronin}
\textit{S.~M.~Voronin}, On the zeros of
zeta-function of quadratic forms, Trudy Mat. Inst. Steklova. 
\textbf{142}  (1976), 135--147 (in Russian); Proc. Steklov Inst. Math. \textbf{3} (1979), 143--155.

\bibitem{Lee}
\textit{Y.~Lee}, On the zeros of Epstein zeta functions, Forum Math.
\textbf{26:6} (2014), 1807--1836.

\bibitem{Bom_Hejhal}
\textit{E.~Bombieri} and \textit{D.~A.~Hejhal}, Sur les z\'eros des fonctions z\^eta d'Epstein,  C.R. Acad.Sci. Paris S\'er.I. \textbf{304} (1987), 213--217.


\bibitem{Voronin1980}
\textit{S.~M.~Voronin},  On the zeros of some
Dirichlet series lying on the critical line, Izv. Acad. Nauk SSSR
Ser. Mat. \textbf {44:1}  (1980), 63--91 (in Russian); Izv. Math. USSR.  \textbf{16:1} (1981), 55--82.

\bibitem{Karatsuba1}
\textit{A.~A.~Karatsuba}, On the zeros of the
Davenport-Heilbronn function lying on the critical line, Izv.
Acad. Nauk SSSR Ser. Mat.  \textbf{54:2} (1990), 303--315 (in Russian); Izv. Math. USSR. \textbf{36:2} (1990), 311--324.

\bibitem{Karatsuba3}
\textit{A.~A.~Karatsuba}, A new approach to
the problem of the zeros of some Dirichlet series, Trudy Mat.
Inst. Steklova. \textbf{207} (1994), 180--196 (in Russian); Proc. Steklov
Inst. Math. \textbf{207} (1995), 163--177.

\bibitem{Selberg1998}
\textit{A.~Selberg},
Zeros on the critical line of linear combinations of Euler products (The talk on the Analisis seminar at Uppsala University on 08.09.1998), preprint 1998, http://www2.math.uu.se/inform/rep98.pdf (p.18) and http://publications.ias.edu/sites/default/files/DOCuu98.pdf.

\bibitem{Selberg1999}
\textit{A.~Selberg}, Linear combinations of L-functions and zeros on the critical Line (The talk at MSRI on a Workshop ``Random Matrices and their Applications'',
June 7-11, 1999), preprint 1999, http://www.msri.org/realvideo/ln/msri/1999/random/selberg/1/main.html.



\bibitem{Bom_Ghosh}
\textit{E.~Bombieri} and \textit{A.~Ghosh},
Around the Davenport--Heilbronn function, Russ. Math. Surv. \textbf{66:2} (2011), 221--270.

\bibitem{Hecke1917}
\textit{E.~Hecke}, \"Uber die Zetafunktion beliebiger algebraischer Zahlk\"orper,  Nach. Ges. Wiss. G\"ottingen. Mat.-Phys. Kl. \textbf{1917} (1917), 77--89.


\bibitem{Hecke1926}
\textit{E.~Hecke}, Zur Theorie der elliptischen
Modulfunktionen,  Math. Ann. \textbf{97} (1926), 210--242.

\bibitem{Tsang}
\textit{K.-M.~Tsang}, The distribution of the values of the Riemann zeta-function,
Ph.D. thesis, Princeton, 1984.


\bibitem{Ghosh}
\textit{A.~Ghosh}, On the Riemann zeta-function~--- Mean value theorems and the distribution of $|S(T)|$, J. Numb. Th. \textbf{17:1} (1983), 93--102.

\bibitem{Selberg1946}
\textit{A.~Selberg}, Contributions to the theory of the Riemann zeta-function,  Arch. Math. Naturvid. \textbf{48:5} (1946), 89--155.


\bibitem{Luo}
\textit{W.~Luo}, Zeros of Hecke L-functions associated with cusp forms,  Acta Arith. \textbf{71:2} (1995), 139--158.


\bibitem{Hecke1937}
\textit{E.~Hecke}, {\"U}ber Modulfunktionen
und die Dirichletschen Reihen mit Eulerscher Produktentwicklung. I,  Math. Ann. \textbf{114} (1937), 1--28.

\bibitem{Rankin1939}
\textit{R.~A.~Rankin}, Contributions to the
theory of Ramanujan's function $\tau(n)$ and similar arithmetical
questions II,  Proc. Camb. Phil. Soc. \textbf{35} (1939), 357--372.


\bibitem{Rezvyakova}
\textit{I.~S.~Rezvyakova}, Zeros of linear
combinations of Hecke $L$~- functions on the critical line, Izv.
Math. \textbf{74:6} (2010), 1277--1314.


\bibitem{Sank}
\textit{A.~Sankaranarayanan}, On Hecke L-functions associated with cusp forms. II: On the sign changes of $S(T)$,  Ann. Acad. Scient. Fenn. Math. \textbf{31:1} (2006), 213--238.


\end{thebibliography}
\end{document}